\numberwithin{equation}{section}
\theoremstyle{plain}
\newtheorem{thm}{Theorem}[section]
\newtheorem{pro}{Proposition}[section]
\newtheorem{lem}{Lemma}[section]
\theoremstyle{definition}
\theoremstyle{remark}
\newtheorem*{acknow}{Acknowledgments}
\begin{document}
\title[Hypersurfaces with constant anisotropic mean curvatures]
{Hypersurfaces with constant anisotropic mean curvatures}
\author[Hui Ma]{Hui Ma}
\address{Department of Mathematical Sciences, Tsinghua University,
Beijing 100084, P.~R.~CHINA}
 \email{hma@math.tsinghua.edu.cn}
\author[Changwei Xiong]{Changwei Xiong}
\address{Department of Mathematical Sciences, Tsinghua University,
Beijing 100084, P.~R.~CHINA}
\email{xiongcw10@mails.tsinghua.edu.cn}
\thanks{This project was partially supported by NSFC grant No. 11271213 and
Tsinghua-K.U. Leuven Bilateral Scientific Cooperation Fund.}
\subjclass[2000]{Primary 53C42, Secondary 53C45, 49Q10}
\keywords{Alexandrov theorem, Wulff shape, Anisotropic mean curvature}

\maketitle

\begin{abstract}
\noindent We apply the evolution method to present a new proof of the Alexandrov type theorem for constant anisotropic mean curvature
hypersurfaces in the Euclidean space $\mathbb{R}^{n+1}$.
\end{abstract}

%%%%%%%%%%%%%
\section{Introduction}
%%%%%%%%%%%%%

The classical Alexandrov theorem is one of the most remarkable results which states that
any closed embedded constant mean curvature hypersurface in the Euclidean space is a round sphere.
It has different methods to prove, for instance, Alexandrov reflection (\cite{Alexandrov}),
application of Reilly's formula (\cite{Reilly}, \cite{Ros}), Montiel-Ros' integration (\cite{MontielRos91}),
a spinorial Reilly-type inequality (\cite{HMZ}), etc.
It can also be generalized to many other ambient manifolds or
hypersurfaces with constant higher order mean curvatures
(\cite{Reilly}, \cite{Montiel}, \cite{Ros}, \cite{MontielRos91}, \cite{Hsiang2}, \cite{DM} and references therein).
Recently, S.~Brendle (\cite{Brendle}) %, \cite{BE})
proved an Alexandrov type theorem
in certain warped product manifolds, including deSitter-Schwarzschild and Reissner-Nordstorm manifolds.
His proof is based on evolution equations, which seems to have generality.

On the other hand, as a natural generalization of surfaces with constant mean curvature,
extensive research has been devoted to studying surfaces with constant anisotropic mean curvature in the Euclidean space
in the fields of analysis, geometry and material sciences (cf. \cite{Taylor}, \cite{FM}, \cite{Andrews}, \cite{Gi},
\cite{BM}, \cite{Palmer}, \cite{CM}, \cite{Winklmann}, \cite{K-Pa1}, \cite{K-Pa3}, \cite{HL1}, \cite{HL2}
and the references therein).
Let $F:\mathbb{S}^n\rightarrow \mathbb{R}^{+}$ be a smooth positive
function defined on the unit sphere which satisfies the following
convexity condition:
\begin{equation}\label{eq:convex}
A_F:=(D^2 F + F1)_x >0, \quad \forall x\in \mathbb{S}^n,
\end{equation}
where $D^2 F$ denotes the Hessian of $F$ on $\mathbb{S}^n$,
$1$ denotes the identity on $T_x\mathbb{S}^n$
and $>0$ means that the matrix is positive definite.
Now let $x:\Sigma\rightarrow \mathbb{R}^{n+1}$ be a smooth immersion of a closed orientable hypersurface and
$\nu: \Sigma\rightarrow \mathbb{S}^n$ denote its Gauss map.
Then the anisotropic surface energy of $x$ is defined as follows:
$${\mathcal F}(x)=\int_{\Sigma}F(\nu)dA.$$
Notice that if $F\equiv 1$, then ${\mathcal F}(x)$ is the usual area functional of $x$.
The algebraic $(n+1)$-volume enclosed by $\Sigma$ is given by
$$V=\frac{1}{n+1}\int_\Sigma \langle x,\nu \rangle dA.$$
It is very interesting to study the critical points of $\mathcal F$ for volume-preserving
variations.
The Euler-Lagrange equation for this constrained variational problem is
\begin{equation}\label{eq:H_F def}
H_F:=-{\rm div}_\Sigma DF +n HF = {\text constant},
\end{equation}
where $H:=-\frac{1}{n}\mathrm{tr}d\nu$ is the mean curvature of $x$.
Thus $H_F$ is called the \emph{anisotropic mean curvature} of $x$.
Notice that if $F\equiv 1$ then $H_F$ is nothing but $nH$.

Among all hypersurfaces with constant anisotropic mean curvature,
there is one class of special hypersurfaces which are the generalization of the unit spheres.
Consider the map
\begin{equation*}
\begin{split}
\varphi \colon  & \mathbb{S}^n \rightarrow {\mathbb R}^{n+1}\\
 &x \mapsto  DF_x + F(x)x,
\end{split}
\end{equation*}
where $DF$ is the gradient of $F$ on $\mathbb{S}^n$.
We call $W_F=\varphi(S^n)$ the \emph{Wulff shape} of $F$ or $\mathcal{F}$.
Under the convexity condition of $F$, $W_F$ is a smooth convex hypersurface and
$\mathcal F$ is called a \emph{parametric elliptic functional}.
When $F\equiv 1$, the Wulff shape is the unit sphere.

Observe that
$$H_F=-{\rm tr} d(\varphi\circ \nu),$$
so one can call
$$S_F:=-d(\varphi\circ \nu)=-A_F\circ d\nu$$
the \emph{anisotropic Weingarten operator} of $x$.
Let $S:=-d\nu$ be the classical Weingarten operator.
Remark that in general $S_F$ is not symmetric, but it still has real eigenvalues $\lambda_1, \cdots, \lambda_n$,
which are called \emph{anisotropic principal curvatures}.
Similar to the classical hypersurfaces theory, we have the following characterization for the
\emph{anisotropic umbilical hypersurfaces} in $\mathbb{R}^{n+1}$:

\begin{lem}[See \cite{HL1},\cite{HL2}]\label{lem:umbilical} Let $x:\Sigma^n \rightarrow {\mathbb R}^{n+1}$ be an immersed closed hypersurface.
If $\lambda_1=\lambda_2=\cdots =\lambda_n\neq 0$ holds  everywhere on $\Sigma$,
then $\Sigma$ is the Wulff shape, up to translations and hometheties.
\end{lem}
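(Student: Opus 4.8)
\bigskip
\noindent\textit{Proof strategy.}
The plan is to work directly with the map $\psi:=\varphi\circ\nu\colon\Sigma\to\mathbb{R}^{n+1}$. Recall from the discussion above that $S_F=-d\psi=-A_F\circ d\nu=A_F\circ S$, where $S=-d\nu$ is the symmetric classical Weingarten operator; since $A_F>0$, the operator $S_F$ is conjugate to the symmetric operator $A_F^{1/2}SA_F^{1/2}$ and hence diagonalizable over $\mathbb{R}$. Therefore the hypothesis $\lambda_1=\cdots=\lambda_n=:\lambda$ forces $S_F=\lambda\,\mathrm{id}$, where $\lambda=\tfrac1n\operatorname{tr}S_F$ is a smooth, nowhere-vanishing function on $\Sigma$. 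Using the standard identification $T_p\Sigma=\nu(p)^{\perp}=T_{\nu(p)}\mathbb{S}^n$, under which $dx_p$ is the inclusion $T_p\Sigma\hookrightarrow\mathbb{R}^{n+1}$ and $d\psi_p=-S_F$, the hypothesis is precisely the identity of $\mathbb{R}^{n+1}$-valued one-forms
\begin{equation*}
d\psi=-\lambda\,dx\qquad\text{on }\Sigma .
\end{equation*}

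First I would prove that $\lambda$ is constant. In local coordinates $(u^1,\dots,u^n)$ this reads $\partial_i\psi=-\lambda\,\partial_ix$; differentiating in $u^j$ and using $\partial_j\partial_i\psi=\partial_i\partial_j\psi$ and $\partial_j\partial_ix=\partial_i\partial_jx$ gives $(\partial_j\lambda)\,\partial_ix=(\partial_i\lambda)\,\partial_jx$ for all $i,j$. As $x$ is an immersion, $\partial_1x,\dots,\partial_nx$ are linearly independent, so for $n\ge2$ (taking $i\ne j$) every $\partial_i\lambda$ vanishes; hence $\lambda\equiv\lambda_0$ is a nonzero constant on the connected hypersurface $\Sigma$. (This is exactly the place where $n\ge2$ is used; the literal statement is intended for $n\ge2$.)

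With $\lambda\equiv\lambda_0$ constant, the one-form identity becomes $d(\psi+\lambda_0 x)=0$, so $\varphi(\nu(p))+\lambda_0\,x(p)\equiv c$ for some fixed $c\in\mathbb{R}^{n+1}$, i.e.
\begin{equation*}
x=\tfrac1{\lambda_0}\,c-\tfrac1{\lambda_0}\,\varphi\circ\nu .
\end{equation*}
It remains to check that $\nu$ is onto $\mathbb{S}^n$. From $S_F=-A_F\circ d\nu=\lambda_0\,\mathrm{id}$ and the invertibility of $A_F$ guaranteed by \eqref{eq:convex}, we get $d\nu=-\lambda_0A_F^{-1}$, which is everywhere nonsingular; thus $\nu$ is a local diffeomorphism, and since $\Sigma$ is compact and $\mathbb{S}^n$ connected, $\nu$ is a finite covering map, in particular surjective (indeed a diffeomorphism when $n\ge2$, since $\mathbb{S}^n$ is simply connected). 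Hence $x(\Sigma)=\tfrac1{\lambda_0}c-\tfrac1{\lambda_0}\varphi(\mathbb{S}^n)=\tfrac1{\lambda_0}c-\tfrac1{\lambda_0}W_F$; more precisely, $x$ equals the embedding $\varphi$ of $W_F$ pre-composed with the diffeomorphism $\nu$ and followed by the homothety of ratio $-1/\lambda_0$ and the translation by $c/\lambda_0$. Therefore $\Sigma$ is the Wulff shape up to translations and homotheties. (One may also verify that $\lambda_0<0$ — consistently with $S_F=-\mathrm{id}$ for $W_F$ itself — so that the homothety ratio $-1/\lambda_0$ is positive.)

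The only genuinely delicate point is the constancy of $\lambda$: one must be careful about the identifications $T_p\Sigma\cong T_{\nu(p)}\mathbb{S}^n$ and about which differentiation is taken, so that the "symmetry of mixed second derivatives" step is legitimate (this plays the role of an anisotropic Codazzi identity). Everything after that reduces to integrating an exact one-form together with a standard covering-space argument.
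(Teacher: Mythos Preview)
The paper does not supply its own proof of this lemma; it is stated with the attribution ``See \cite{HL1}, \cite{HL2}'' and used as a black box. So there is no in-paper argument to compare against.

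Your argument is correct and is essentially the standard one (and the one given in the cited references): from diagonalizability of $S_F=A_F S$ the umbilic hypothesis gives $S_F=\lambda\,\mathrm{id}$, hence $d(\varphi\circ\nu)=-\lambda\,dx$; equality of mixed second partials of the $\mathbb{R}^{n+1}$-valued maps $x$ and $\varphi\circ\nu$ then forces $(\partial_j\lambda)\partial_i x=(\partial_i\lambda)\partial_j x$, whence $\lambda\equiv\lambda_0$ for $n\ge2$; integrating yields $x=\tfrac{1}{\lambda_0}c-\tfrac{1}{\lambda_0}\varphi\circ\nu$; and since $d\nu=-\lambda_0 A_F^{-1}$ is invertible, $\nu$ is a covering of $\mathbb{S}^n$, hence a diffeomorphism for $n\ge2$. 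In the moving-frame language the paper sets up, your ``symmetry of mixed partials'' step is exactly the anisotropic Codazzi identity $s_{ijk}=s_{ikj}$ of \eqref{eq:Codazzi} applied to $s_{ij}=\lambda\delta_{ij}$.

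Two small remarks. First, your parenthetical assertion that one may verify $\lambda_0<0$ is orientation-dependent (flipping $\nu$ changes both $S$ and the point at which $A_F$ is evaluated), and in any case is not needed: the conclusion ``up to translations and homotheties'' already accommodates the ratio $-1/\lambda_0$ of either sign. Second, you implicitly use connectedness of $\Sigma$ when passing from ``locally constant'' to ``constant''; this is standard but worth stating, as ``closed'' in the lemma means compact without boundary.
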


Let $\sigma_r$ be the elementary symmetric functions of the anisotropic principal curvatures
$\lambda_1,\cdots, \lambda_n$, i.e.,
$\sigma_r:=\sum_{i_1<\cdots <i_r} \lambda_{i_1}\cdots\lambda_{i_r}$ for $1\leq r \leq n$.
Set $\sigma_0=1$.
Then the $r$-th anisotropic mean curvature $H_r$ is defined by $H_r=\sigma_r/C_n^r$,
where $C_n^r=\frac{n!}{r!(n-r)!}$. In particular, $H_1=H_F/n$.

We have proved the following Alexandrov type theorem in \cite{HLMG}:

\begin{thm}\label{Thm:Alexandrov}
Let $\Sigma$ be a closed oriented hypersurface embedded in the Euclidean space $\mathbb{R}^{n+1}$.
If $H_r$ is constant for some $r=1,\cdots, n$, then $\Sigma$ is the Wulff shape, up to translations and
hometheties.
\end{thm}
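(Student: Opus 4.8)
The plan is to reduce the statement, via Lemma~\ref{lem:umbilical}, to the assertion that the anisotropic principal curvatures $\lambda_1,\dots,\lambda_n$ coincide at every point of $\Sigma$, and to obtain this by squeezing together three relations that are equalities precisely on the Wulff shape: an anisotropic Heintze--Karcher inequality, the anisotropic Minkowski formulas, and the Newton--MacLaurin inequalities for the real numbers $\lambda_i$. The Heintze--Karcher step is the one to be carried out by the evolution method, in the spirit of \cite{Brendle}. (We adopt throughout the sign convention for which the curvatures below are positive; this is immaterial for the argument.) To prepare, I would fix the orientation of $\Sigma$ so that at a point $p_0$ farthest from the origin the classical Weingarten operator $S$ is positive definite; since $A_F>0$ by \eqref{eq:convex}, the operator $S_F=A_F\circ S$ is then similar to a positive definite symmetric matrix at $p_0$, so $\lambda_1(p_0),\dots,\lambda_n(p_0)>0$ and hence $\sigma_r(p_0)>0$, so that the constant $\sigma_r$ is positive. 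The standard continuity/connectedness argument in G\aa rding's cone $\Gamma_r$ (exactly as for the isotropic higher order mean curvatures, cf.\ \cite{MontielRos91}) then yields $(\lambda_1,\dots,\lambda_n)\in\Gamma_r$ everywhere, so that $H_1,\dots,H_{r-1},H_r$ are all positive on $\Sigma$.

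The evolution step produces the anisotropic Heintze--Karcher inequality
\[
\int_\Sigma \frac{F(\nu)}{H_1}\,dA\ \ge\ (n+1)\,V,
\]
valid whenever $H_1>0$, with equality if and only if $\Sigma$ is anisotropically umbilical. Let $\Omega$ be the bounded domain with $\partial\Omega=\Sigma$; embeddedness is used here. Following Brendle, I would flow $\Sigma$ into $\Omega$ along the anisotropic parallel hypersurfaces $\Phi(\cdot,t)\colon\Sigma\to\mathbb{R}^{n+1}$, $\Phi(p,t)=x(p)+t\,\varphi(\nu(p))$, $t\ge 0$ --- the solution of $\partial_t\Phi=\varphi(\nu)$ with $\Phi(\cdot,0)=x$. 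Since $d(\varphi\circ\nu)=-S_F$ as an endomorphism of $T_p\Sigma=\nu(p)^{\perp}$, and since $\varphi(\nu)$ has normal part $F(\nu)\,\nu$, a short computation gives $D_p\Phi(\cdot,t)=\mathrm{id}-tS_F$ on $T_p\Sigma$ and hence Jacobian $F(\nu(p))\prod_{i=1}^{n}\bigl(1-t\lambda_i(p)\bigr)$ --- a genuine identity even though $S_F$ need not be symmetric, because the determinant is still the product of its (real) eigenvalues. Using embeddedness, one shows that $\Phi$ maps $\{(p,t):0\le t\le c(p)\}$ onto $\Omega$, where $c(p)$ is the anisotropic cut time along the $p$-ray, and that $c(p)\le 1/H_1(p)$; the arithmetic--geometric mean inequality $\prod_i(1-t\lambda_i)\le(1-tH_1)^n$ on $[0,c(p)]$, the area formula, and $\int_0^{1/H_1}(1-tH_1)^n\,dt=\tfrac{1}{(n+1)H_1}$ then give the inequality. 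In the equality case one reads off $c(p)=1/H_1(p)$ and $\lambda_1(p)=\dots=\lambda_n(p)$ for almost every $p$.

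To finish, I would bring in the anisotropic Minkowski formula $\int_\Sigma\bigl(F(\nu)H_{r-1}-H_r\langle x,\nu\rangle\bigr)\,dA=0$, valid for any closed immersed hypersurface (with $H_0=1$). Since $H_r$ is constant and $(n+1)V=\int_\Sigma\langle x,\nu\rangle\,dA$ by definition, this gives $\int_\Sigma F(\nu)H_{r-1}\,dA=(n+1)H_r\,V$. On the other hand, because $(\lambda_1,\dots,\lambda_n)\in\Gamma_r$ and $F>0$, the Newton--MacLaurin inequality $H_1H_{r-1}\ge H_r$ yields the pointwise bound $F(\nu)/H_1\le F(\nu)H_{r-1}/H_r$. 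Combining,
\[
(n+1)\,V\ \le\ \int_\Sigma\frac{F(\nu)}{H_1}\,dA\ \le\ \frac{1}{H_r}\int_\Sigma F(\nu)H_{r-1}\,dA\ =\ (n+1)\,V,
\]
so all of these are equalities. In particular the equality case of the Heintze--Karcher step forces $\lambda_1=\dots=\lambda_n$ almost everywhere, hence everywhere by continuity, with common value $H_1=H_r^{1/r}>0$. Lemma~\ref{lem:umbilical} then identifies $\Sigma$ with the Wulff shape, up to translations and homotheties.

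I expect the heart of the difficulty to be the Heintze--Karcher step. First one must set up the correct ``anisotropic wavefront'' flow so that its Jacobian comes out \emph{exactly} as $F(\nu)\prod_i(1-t\lambda_i)$, notwithstanding that $S_F$ is not symmetric and that the flow direction $\varphi(\nu)$ is not normal to $\Sigma$. More delicate is to prove that these wavefronts sweep out all of $\Omega$ --- an ``anisotropic nearest point'' argument relying on embeddedness, in which the convexity condition \eqref{eq:convex} is what makes the relevant dilated Wulff shapes play the role of metric balls --- together with the bound $c(p)\le 1/H_1(p)$ on the cut time, so that both the inequality and, crucially, its rigidity statement go through. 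Once this is in place, the remaining ingredients (membership in $\Gamma_r$, the anisotropic Minkowski formulas, and the Newton--MacLaurin inequalities) are obtained essentially as in the isotropic theory.
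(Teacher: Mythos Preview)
Your overall architecture is correct and matches the paper's: orient so that $H_r>0$, use G\aa rding's cone to make $H_1>0$ everywhere, prove the anisotropic Heintze--Karcher inequality, and combine it with the anisotropic Minkowski identity \eqref{eq:Minkowski_r} and a Newton--MacLaurin inequality to force equality and hence anisotropic umbilicity. The algebraic endgame you give (using $H_1H_{r-1}\ge H_r$) is a harmless variant of the one in Section~\ref{Sec:Proof} (which uses $H_{r-1}\ge H_r^{(r-1)/r}$ and $H_r^{1/r}\le H_1$).

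The substantive difference is in how you obtain Theorem~\ref{Thm:HK}. Although you call it ``the evolution method, in the spirit of \cite{Brendle}'', what you actually sketch is the Montiel--Ros/\cite{HLMG} argument: compute the Jacobian $F(\nu)\prod_i(1-t\lambda_i)$ of the anisotropic normal map, bound it by $F(\nu)(1-tH_1)^n$ via the arithmetic--geometric mean, and apply the area formula over the region swept out up to the anisotropic cut time. That is precisely the proof the paper is offering an \emph{alternative} to (see the remark after Theorem~\ref{Thm:HK}). The paper instead runs the same flow $\partial_t\Phi=\nu_F$ but analyzes it \`a la Brendle: it introduces $Q(t)=n\int_{\Sigma_t}\frac{F\circ\nu_t}{H_F}\,dA_t$, differentiates using the evolution equations of Proposition~\ref{pro:flow}, and simplifies $Q'(t)$ by means of the elliptic identity \eqref{eq:Delta_F F}. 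The inequality then comes from $\mathrm{tr}(S_F^2)\ge H_F^2/n$ (Cauchy--Schwarz on the eigenvalues), yielding $Q'(t)\le -(n+1)\int_{\Sigma_t}F\circ\nu_t\,dA_t$, and one integrates with the co-area formula. So the paper's proof trades your Jacobian/AM--GM step for a monotonicity formula driven by \eqref{eq:Delta_F F}; the latter is the new contribution and is what makes the argument parallel to \cite{Brendle}. Your version is correct, but it reproduces the earlier proof in \cite{HLMG} rather than the one this paper is presenting. (A minor point: your stated bound $c(p)\le 1/H_1(p)$ is not what makes AM--GM legitimate on $[0,c(p)]$; one needs $c(p)\le 1/\max_i\lambda_i(p)$, i.e.\ the first anisotropic focal time, so that all factors $1-t\lambda_i$ are nonnegative.)
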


In this paper, we will apply the evolution method introduced by Brendle \cite{Brendle}
to give a new proof of Theorem \ref{Thm:Alexandrov}.
In Section \ref{Sec:Pre}, we first recall hypersurfaces theory in the Euclidean space in terms of moving frames and then
we prove three fundamental equations for an immersed oriented hypersurface in $\mathbb{R}^{n+1}$
related to its anisotropic mean curvature.
In Section \ref{Sec:HK}, we use one of the fundamental equations obtained in Section \ref{Sec:Pre}
and employ the evolution method introduced by Brendle (\cite{Brendle}) to show the Heintz-Karcher type inequality
(See Theorem \ref{Thm:HK}).
In Section \ref{Sec:Proof}, we use the standard argument to prove Theorem \ref{Thm:Alexandrov}.

\begin{acknow}
The authors would like to thank Professors Haizhong~Li, Xiang~Ma, Chia-Kuei~Peng and Hui-Chun~Zhang
for their interest and helpful comments on this work.
\end{acknow}

%%%%%%%%%%%%%%
\section{Preliminaries and basic equations}
\label{Sec:Pre}
%%%%%%%%%%%%%%

For the convenience of the reader, we firstly recall the basic facts related to
anisotropic mean curvature of a hypersurface in terms of moving frames.
See more details in \cite{HL2}.

Let $x:\Sigma\rightarrow \mathbb{R}^{n+1}$ be a smooth oriented hypersurface with its Gauss map
$\nu:\Sigma\rightarrow \mathbb{S}^n$.
Let $\{E_1,\cdots, E_n\}$ be a local orthonormal frame on $\mathbb{S}^n$,
then $\{e_1:=E_1\circ\nu, \cdots, e_n:=E_n\circ\nu\}$
is a local orthonormal frame of $\Sigma$ and $e_1, \cdots, e_n, e_{n+1}=\nu$ is a local orthonormal frame on
$\mathbb{R}^{n+1}$ along $x$.
Denote the dual frames of $E_i$ and $e_i$ by $\theta_i$ and $\omega_i$, respectively,
and the corresponding connection forms by $\theta_{ij}$ and $\omega_{ij}$.

Throughout this paper, we agree on the range of indices: $1\leq i,j,\cdots \leq n$
and $1\leq A,B, \cdots \leq n+1$.
Recall that the structure equations of $y:\mathbb{S}^n\rightarrow \mathbb{R}^{n+1}$ are as follows:
\begin{equation}\label{eq:S^n}
\begin{split}
dy&=\sum_i \theta_i E_i,\quad dE_i=\sum_j \theta_{ij}E_j-\theta_i y,\\
d\theta_i&=\sum_j \theta_{ij}\wedge \theta_j, \quad
d\theta_{ij}-\sum_k \theta_{ik}\wedge \theta_{kj}=-\theta_i\wedge\theta_j,
\end{split}
\end{equation}
where $\theta_{ij}+\theta_{ji}=0$.
For a smooth positive function $F: \mathbb{S}^n\rightarrow \mathbb{R}^{+}$, we define the covariant
differentials $F_i$, $F_{ij}$ and $F_{ijk}$ as follows
\begin{equation}\label{eq:F_i}
\begin{split}
&dF=\sum_i F_i\theta_i, \\
&dF_i+\sum_j F_j \theta_{ji}=\sum_j F_{ij}\theta_j, \\%\quad D^2F=F_{ij}\theta_i\otimes \theta_j,\\
&dF_{ij}+F_{ik}\theta_{kj}+F_{kj}\theta_{ki}=\sum_k F_{ijk}\theta_k.
\end{split}
\end{equation}
It follows from \eqref{eq:S^n} and Ricci identity that
$$F_{ijk}-F_{ikj}=F_j \delta_{ik}-F_k \delta_{ij},$$
which implies that $(F_{ij}+F\delta_{ij})_{,k}=(F_{ik}+F\delta_{ik})_{,j}$.
Denote the coefficients of $A_F$ by $A_{ij}=F_{ij}+F\delta_{ij}$, then we have
\begin{equation}\label{eq:Aijk=Aikj}
A_{ij,k}=A_{ik,j},
\end{equation}
where
\begin{equation}\label{eq:dA_ij}
\sum_k A_{ij,k}\theta_k=dA_{ij}+A_{ik}\theta_{kj}+A_{kj}\theta_{ki}.
\end{equation}

\medskip

The structure equations of $x: \Sigma\rightarrow \mathbb{R}^{n+1}$ are given by
\begin{equation}\label{eq:Sigma}
\begin{split}
dx&=\sum_i \omega_i e_i, \\
de_i&=\sum_j \omega_{ij}e_j+\sum_j h_{ij}\omega_j \nu, \quad d\nu=-\sum_{ij} h_{ij}\omega_j e_i,\\
d\omega_i&=\sum_j \omega_{ij}\wedge \omega_j, \quad
d\omega_{ij}=\omega_{ik}\wedge\omega_{kj}-\frac{1}{2}R_{ijkl}\omega_k\wedge\omega_l,
\end{split}
\end{equation}
where $\omega_{i n+1}=\sum_j h_{ij}\omega_j$ and $h_{ij}=h_{ji}$.
Making use of \eqref{eq:S^n}, we get
$$de_i=d(E_i\circ\nu)=\nu^*dE_i=\sum_j \nu^*\theta_{ij}e_j-\nu^*\theta_i\nu,$$
thus we have
\begin{equation}\label{eq:omega theta}
\omega_{ij}=\nu^* \theta_{ij}, \quad\quad \nu^*\theta_i=-\sum_j h_{ij}\omega_j.
\end{equation}

Let $f$ be a smooth function on $\Sigma$. Define the first, second covariant derivatives $f_i$, $f_{ij}$ as follows
$$df=\sum_i f_i \omega_i, \quad\quad\quad df_i+\sum_j f_j \omega_{ji}=\sum_j f_{ij}\omega_{j}.$$
Considering $x$ and $\nu$ as smooth functions on $\Sigma$, we have
\begin{eqnarray}
&&x_i=e_i,\quad x_{ij}=h_{ij}\nu,\label{eq:x_i x_ij}\\
&&\nu_i=-h_{ij}e_j, \quad
\nu_{ij}=-h_{ikj}e_k-h_{ik}h_{kj}\nu,\label{eq:nu_i nu_ij}
\end{eqnarray}
where the covariant derivative $h_{ijk}$ is defined by
$dh_{ij}+h_{kj}\omega_{ki}+h_{ik}\omega_{kj}=\sum_k h_{ijk}\omega_k$.

For a smooth positive function $F:\mathbb{S}^{n}\rightarrow \mathbb{R}^{+}$,
$F\circ \nu$ is a function on $\Sigma$. We define the covariant derivatives $(F\circ\nu)_i$, $(F_i\circ\nu)_j$ and
$(A_{ij}\circ\nu)_k$ by
\begin{equation}\label{eq:Fnui}
\begin{split}
&d(F\circ\nu)=\sum_i (F\circ\nu)_i\omega_i,\\
&d(F_i\circ\nu)+\sum_j (F_j\circ\nu)\omega_{ji}=\sum_j (F_i\circ\nu)_j\omega_j,\\
&d(A_{ij}\circ\nu)+\sum_k (A_{kj}\circ\nu)\omega_{ki}+\sum_k(A_{ik}\circ\nu)\omega_{kj}
=\sum_k (A_{ij}\circ\nu)_k\omega_k.
\end{split}
\end{equation}
Taking $\nu^*$ on both sides of equations \eqref{eq:F_i} and using \eqref{eq:omega theta} and \eqref{eq:Fnui}, we get
 (\cite{HL2})
\begin{eqnarray}
&&(F\circ\nu)_i=-\sum_j h_{ij}(F_j\circ\nu),\label{eq:Fnu_i}\\
&&(F_i\circ\nu)_j=-h_{jk}(F_{ik}\circ\nu),\label{eq:Fnu_ij}\\
&&(A_{ij}\circ\nu)_k=-\sum_{p}(A_{ijp}\circ\nu)h_{pk}.\label{eq:A_ijk}
\end{eqnarray}
Denote $S_F e_i=\sum_j s_{ij}e_j$, then
\begin{equation}
s_{ij}=\sum_l (A_{il}\circ\nu )h_{lj}.
\end{equation}
Thus we have
\begin{equation}\label{eq:s_ijk}
s_{ijk}=-\sum_{lp} (A_{ilp}\circ \nu)h_{pk} h_{lj}+\sum_l(A_{il}\circ\nu)h_{ljk},
\end{equation}
and
\begin{equation}\label{eq:Codazzi}
s_{ijk}=s_{ikj}.
\end{equation}

Denote by $p:=\langle x,\nu\rangle$ the support function. % and we abbreviate $F=F\circ\nu$.
The following identities were already derived in \cite{Winklmann} in the case when
the anisotropic mean curvature is constant and \eqref{eq:Delta_F nu} has been obtained in \cite{CM}.
\eqref{eq:Delta_F F} will play an important role in the proof of Theorem \ref{Thm:HK}.

\begin{pro}
\begin{eqnarray}
&&\Delta_F (F\circ\nu)+\mathrm{tr}(A_F S^2)F+\langle \nabla H_F, DF|_{\nu}\rangle =\mathrm{tr}(S_F^2)\label{eq:Delta_F F}\\
&&\Delta_F\nu+\mathrm{tr}(A_F S^2)\nu+\nabla H_F=0,\label{eq:Delta_F nu}\\
&&\Delta_F p+\mathrm{tr}(A_F S^2)p+H_F+\langle x, \nabla H_F\rangle=0,\label{eq:Delta_F p}
\end{eqnarray}
where $\Delta_F f:=\mathrm{div}(A_F \nabla f)$, $\nabla f$ denotes the gradient of $f$ with respect to the induced
metric on $\Sigma$ and $\mathrm{tr}(S_F^2)=\sum_{i,j} s_{ij}s_{ji}$ and $\mathrm{tr}(S^2)=\sum_{i,j}h_{ij}^2$.
\end{pro}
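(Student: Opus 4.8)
The plan is to compute each of the three identities by directly differentiating the relevant geometric quantity using the structure equations and the covariant-derivative formulas collected above, so the argument is essentially a bookkeeping exercise once the right order of operations is chosen. I would begin with \eqref{eq:Delta_F nu}, since $\nu$ is the most elementary of the three objects and the other two identities can be partly reduced to it. Regard $\nu:\Sigma\to\mathbb{R}^{n+1}$ as a vector-valued function; by \eqref{eq:nu_i nu_ij} we have $\nu_{ij}=-h_{ikj}e_k-h_{ik}h_{kj}\nu$. Then $\Delta_F\nu=\sum_{ij}A_{ij}(\nu)\,\nu_{ij}$ (with $A_{ij}$ evaluated at $\nu$, i.e.\ $A_{ij}\circ\nu$), and I would split the result into its tangential part $-\sum_{ijk}(A_{ij}\circ\nu)h_{ikj}e_k$ and its normal part $-\sum_{ij}(A_{ij}\circ\nu)h_{ik}h_{kj}\,\nu=-\mathrm{tr}(A_F S^2)\,\nu$. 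For the tangential part I would use the Codazzi equation $h_{ikj}=h_{jik}$ together with \eqref{eq:A_ijk} / \eqref{eq:s_ijk} to recognize $\sum_{ij}(A_{ij}\circ\nu)h_{ikj}$ as the $k$-th component of $\nabla H_F$; here $H_F=\mathrm{tr}(S_F)=\sum_{ij}(A_{ij}\circ\nu)h_{ij}$, so differentiating $H_F$ and using the symmetry relations \eqref{eq:Aijk=Aikj}, \eqref{eq:A_ijk}, \eqref{eq:Codazzi} is exactly what makes this term collapse. This yields $\Delta_F\nu+\mathrm{tr}(A_F S^2)\nu+\nabla H_F=0$.

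Next I would obtain \eqref{eq:Delta_F p} by pairing \eqref{eq:Delta_F nu} with $x$ and correcting for the terms produced by moving the operator $\Delta_F$ past the inner product. Since $p=\langle x,\nu\rangle$, we have $p_i=\langle e_i,\nu\rangle+\langle x,\nu_i\rangle=\langle x,\nu_i\rangle=-h_{ij}\langle x,e_j\rangle$, and then $p_{ij}$ is computed from \eqref{eq:x_i x_ij}, \eqref{eq:nu_i nu_ij}: the $\langle x,\nu_{ij}\rangle$ piece reproduces a multiple of $\langle x,\cdot\rangle$ contracted against $A_F$, while the $\langle x_i,\nu_j\rangle+\langle x_j,\nu_i\rangle$ cross term contributes $-2h_{ij}$ after using $x_i=e_i$, and the $\langle x_{ij},\nu\rangle$ term contributes $h_{ij}p$... wait — more cleanly, one simply writes $\Delta_F p=\langle \Delta_F x,\nu\rangle + 2\langle A_F\nabla x,\nabla\nu\rangle_{\mathrm{something}} + \langle x,\Delta_F\nu\rangle$ and evaluates each piece: $\Delta_F x=\sum_{ij}(A_{ij}\circ\nu)x_{ij}=\sum_{ij}(A_{ij}\circ\nu)h_{ij}\,\nu=H_F\,\nu$, the middle term gives $-\,\mathrm{tr}(A_F S)=-H_F$ (so that together with something it produces the single $H_F$ in the identity — I will watch signs carefully), and $\langle x,\Delta_F\nu\rangle$ is handled by \eqref{eq:Delta_F nu}. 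Collecting terms produces $\Delta_F p+\mathrm{tr}(A_F S^2)p+H_F+\langle x,\nabla H_F\rangle=0$.

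Finally, for \eqref{eq:Delta_F F}, I would treat $F\circ\nu$ as a scalar on $\Sigma$ and use \eqref{eq:Fnu_i}, \eqref{eq:Fnu_ij} to compute $(F\circ\nu)_{ij}$ explicitly, then contract with $A_{ij}\circ\nu$. The key algebraic point is that after substituting $(F\circ\nu)_i=-h_{ij}(F_j\circ\nu)$ and differentiating again, the terms regroup into: a term $\mathrm{tr}(S_F^2)=\sum_{ij}s_{ij}s_{ji}$ coming from the product of two copies of the second fundamental form mediated by $A_F$ and $D^2F$; a term $-\mathrm{tr}(A_F S^2)\,F$ coming from the $F\delta_{ij}$ part of $A_{ij}$ combined with $\sum h_{ik}h_{kj}$; and a term $-\langle\nabla H_F, DF|_\nu\rangle$ coming from the derivative of $h$ contracted appropriately, using Codazzi and the definition of $H_F$ exactly as in the first identity. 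Rearranging gives \eqref{eq:Delta_F F}. The main obstacle, and the only place genuine care is needed, is the bookkeeping in this last computation: one must correctly expand $(A_{ij}\circ\nu)(F\circ\nu)_{ij}$ while keeping track that $A_{ij}$, $F_i$, and their derivatives are functions on $\mathbb{S}^n$ pulled back via $\nu$, so every covariant derivative picks up a factor of $h$ through \eqref{eq:Fnu_i}--\eqref{eq:A_ijk}; sorting the resulting quartic-in-$h$ expression into the three advertised traces, with the sign of the $\langle\nabla H_F,DF|_\nu\rangle$ term correct, is where an error would most easily creep in, so I would verify it by specializing to $F\equiv 1$ (where it must reduce to the classical Bochner-type identity $\Delta p$, $\Delta \nu$ formulas with $A_F=1$, $S_F=S$) as a consistency check.
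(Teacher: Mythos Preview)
Your overall plan matches the paper's approach---a direct computation using the structure equations and \eqref{eq:Fnu_i}--\eqref{eq:s_ijk}---but there is a concrete error in how you expand the operator $\Delta_F$. You write $\Delta_F\nu=\sum_{ij}(A_{ij}\circ\nu)\,\nu_{ij}$, but by definition $\Delta_F f=\mathrm{div}(A_F\nabla f)=\sum_{i}\bigl((A_{ij}\circ\nu)f_j\bigr)_{,i}$, so there is an additional term $\sum_{j}\bigl(\sum_i(A_{ij}\circ\nu)_{,i}\bigr)f_j$ coming from the divergence of $A_F$. This term does not vanish: by \eqref{eq:A_ijk} it equals $-\sum_{j}\bigl(\sum_{ip}(A_{ijp}\circ\nu)h_{pi}\bigr)f_j$. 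With your truncated formula the tangential part of $\Delta_F\nu$ would be $-\sum_{ij}(A_{ij}\circ\nu)h_{ijk}\,e_k$, whereas
\[
(H_F)_k=\sum_{ij}(A_{ij}\circ\nu)_{,k}\,h_{ij}+\sum_{ij}(A_{ij}\circ\nu)\,h_{ijk},
\]
so the two do not match; it is precisely the missing $(\mathrm{div}\,A_F)$ contribution that supplies the discrepancy, via the total symmetry of $A_{ijk}$ encoded in \eqref{eq:Aijk=Aikj} (equivalently, via \eqref{eq:s_ijk} and \eqref{eq:Codazzi}). The same omission would corrupt your computation of $\Delta_F(F\circ\nu)$, where the analogous term is needed to assemble the full $\langle\nabla H_F,DF|_\nu\rangle$.

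The paper keeps this product-rule term from the outset: it writes $\Delta_F\nu=\sum_{k,i}\bigl((A_{ik}\circ\nu)\nu_k\bigr)_i$, expands both factors, and then recognizes the tangential coefficient as $-\sum_i s_{iil}$, hence $-\nabla H_F$ via \eqref{eq:Codazzi}; the computations for $F\circ\nu$ and $p$ are handled the same way. Your Leibniz-rule approach to \eqref{eq:Delta_F p} is fine (indeed $\Delta_F p=\langle x,\Delta_F\nu\rangle+\langle\Delta_F x,\nu\rangle+2\sum_{ij}(A_{ij}\circ\nu)\langle x_i,\nu_j\rangle$ gives the identity immediately once \eqref{eq:Delta_F nu} is in hand), but only after you restore the $(\mathrm{div}\,A_F)\cdot\nabla(\cdot)$ term in the two vector-valued pieces.
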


\begin{proof}Making use of \eqref{eq:A_ijk}, \eqref{eq:Fnu_i}, \eqref{eq:Fnu_ij}, \eqref{eq:s_ijk} and \eqref{eq:Codazzi},
we get
\begin{eqnarray*}
\Delta_F (F\circ\nu)&=&\sum_{k,i}((A_{ik}\circ \nu)(F\circ \nu)_k)_{i}\\
&=&\sum_{j,i,k,p}(A_{ikp}\circ\nu)h_{pi}h_{kj} (F_j\circ\nu)-\sum_{k,i,j}(A_{ik}\circ\nu)h_{kji}(F_j\circ\nu)
-\sum_{k,i,j}(A_{ik}\circ\nu)h_{kj}(F_j\circ\nu)_i\\
&=&\sum_j(\sum_{i,k,p}(A_{ikp}\circ\nu)h_{pi}h_{kj}-\sum_{i,k}(A_{ik}\circ\nu)h_{kji})(F_j\circ\nu)
+\sum_{k,i,j,p}(A_{ik}\circ\nu)h_{kj}h_{ip}(F_{jp}\circ\nu)\\
&=&-\sum_j s_{iji}(F_j\circ\nu)+\sum_{k,i,j,p}(A_{ik}\circ\nu)h_{kj}h_{ip}(A_{jp}\circ\nu-(F\circ\nu)\delta_{jp})\\
&=&-\sum_j s_{iij}(F_j\circ\nu)+\sum_{i,j}s_{ij} s_{ji}- \sum_{i,j,k}(A_{ik}\circ\nu)h_{kj}h_{ij}(F\circ\nu)\\
&=&-\sum_j (H_F)_j (F_j\circ\nu) +\mathrm{tr}(S_F^2)-\mathrm{tr}(A_F S^2)F\circ\nu.
\end{eqnarray*}
This proves \eqref{eq:Delta_F F}.

By \eqref{eq:A_ijk}, \eqref{eq:nu_i nu_ij}, \eqref{eq:s_ijk} and direct calculations, we get
\begin{eqnarray*}
\Delta_F\nu&=&\sum_{k,i}((A_{ik}\circ\nu)\nu_k)_i\\
&=&\sum_{i,k,p,l}(A_{ikp}\circ\nu)h_{pi}h_{kl}e_l-\sum_{i,k,p}(A_{ik}\circ\nu)(h_{kip}e_p+h_{ip}h_{pk}\nu)\\
&=&-\sum_{l} s_{iil}e_l-\sum_{i,k,p}(A_{ik}\circ\nu)h_{kp}h_{pi}\nu\\
&=&-\sum_l (H_F)_l e_l-\mathrm{tr}(A_F S^2)\nu,
\end{eqnarray*}
which immediately verifies \eqref{eq:Delta_F nu}.

It follows from $dp=\langle x,d\nu\rangle=\langle x, \nu_i\omega_i\rangle$ that
$p_i=\langle x,\nu_i\rangle$. Using \eqref{eq:x_i x_ij} and \eqref{eq:nu_i nu_ij}, we get
\begin{eqnarray*}
p_{ij}&=&\langle x,\nu_i\rangle_j=\langle e_j, -h_{ik}e_k\rangle +\langle x, -h_{ijk}e_k-h_{ik}h_{kj}\nu\rangle,\\
&=&-h_{ij}-h_{ijk}\langle x,e_k\rangle -h_{ik}h_{kj}p.
\end{eqnarray*}
Together with \eqref{eq:A_ijk} and \eqref{eq:s_ijk}, it yields
\begin{eqnarray*}
\Delta_F p&=&\sum_{i,k}((A_{ik}\circ\nu)p_k)_i\\
&=&\sum_{i,k,q,l}(A_{ikq}\circ\nu)h_{qi}h_{kl}\langle x,e_l\rangle
-\sum_{i,k,q}(A_{ik}\circ\nu)(h_{ki}+h_{kiq}\langle x, e_q\rangle +h_{kq}h_{qi}p),\\
&=&-\sum_l s_{iil}\langle x,e_l\rangle -H_F-\mathrm{tr}(A_F S^2)p\\
&=&-\langle x, \nabla H_F\rangle -H_F-\mathrm{tr}(A_FS^2)p,
\end{eqnarray*}
which completes the proof of \eqref{eq:Delta_F p}.
\end{proof}

The following Minkowski formula and its higher order version were obtained in \cite{HL1} and \cite{HL2}.

\begin{pro}
Let $\Sigma$ be a closed orientable hypersurface immersed in $\mathbb{R}^{n+1}$. Then
\begin{equation*}
\int_{\Sigma}(nF\circ\nu +H_F\langle x,\nu \rangle)dA=0,
\end{equation*}
More generally,
\begin{equation}\label{eq:Minkowski_r}
\int_{\Sigma}(H_r F\circ\nu+H_{r+1}\langle x,\nu\rangle)dA=0
\end{equation}
for $r=0, 1, \cdots, n-1$.
\end{pro}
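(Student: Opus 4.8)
The plan is to exhibit, for each $r\in\{0,1,\dots,n-1\}$, a tangent vector field on $\Sigma$ whose divergence is a fixed positive multiple of $H_r(F\circ\nu)+H_{r+1}\langle x,\nu\rangle$, and then to apply the divergence theorem on the closed manifold $\Sigma$. The field I would use is
\begin{equation*}
Z:=(F\circ\nu)\,x-\langle x,\nu\rangle\,(\varphi\circ\nu),
\end{equation*}
regarded as an $\mathbb{R}^{n+1}$-valued function on $\Sigma$. Because $\varphi\circ\nu=\sum_j(F_j\circ\nu)e_j+(F\circ\nu)\nu$, the $\nu$-component of $Z$ vanishes, so $Z$ is everywhere tangent to $\Sigma$; explicitly $Z=(F\circ\nu)\,x^{\top}-\langle x,\nu\rangle\sum_j(F_j\circ\nu)e_j$, where $x^{\top}$ is the tangential part of the position vector. (As a check, $Z\equiv0$ on the Wulff shape, where $x=\varphi\circ\nu$ and $\langle x,\nu\rangle=F\circ\nu$.)

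The first step is to compute $\nabla_{e_i}Z$ from the structure equations \eqref{eq:Sigma}, the identities \eqref{eq:x_i x_ij}, \eqref{eq:nu_i nu_ij}, \eqref{eq:Fnu_i}--\eqref{eq:A_ijk}, and the definition $S_F=-d(\varphi\circ\nu)$. Since $Z$ is tangent, $\nabla_{e_i}Z$ is the tangential component of the ordinary directional derivative of $Z$ (viewed as an $\mathbb{R}^{n+1}$-valued function) along $e_i$, and a direct computation should give
\begin{equation*}
\nabla_{e_i}Z=(F\circ\nu)\,e_i+\langle x,\nu\rangle\,S_Fe_i+R_i,\qquad
R_i=\langle Se_i,x^{\top}\rangle\,(\varphi\circ\nu)^{\top}-\langle Se_i,(\varphi\circ\nu)^{\top}\rangle\,x^{\top},
\end{equation*}
where $S=-d\nu$ and $(\varphi\circ\nu)^{\top}=\sum_j(F_j\circ\nu)e_j$. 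Tracing this already proves the first Minkowski formula: the remainder terms $R_i$ contribute $\sum_i\langle R_i,e_i\rangle=\langle Sx^{\top},(\varphi\circ\nu)^{\top}\rangle-\langle S(\varphi\circ\nu)^{\top},x^{\top}\rangle=0$ by the symmetry of $S$, while $\mathrm{tr}(S_F)=H_F$; hence $\mathrm{div}_\Sigma Z=n(F\circ\nu)+H_F\langle x,\nu\rangle$, and integrating over $\Sigma$ gives the case $r=0$.

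For general $r$ I would contract with the anisotropic Newton transformations $P_r$ of $S_F$, defined by $P_0=\mathrm{Id}$ and $P_r=\sigma_r\,\mathrm{Id}-S_FP_{r-1}$, and integrate $\mathrm{div}_\Sigma(P_rZ)$. By the product rule this splits into a term built from the divergence of $P_r$, which vanishes since $P_r$ is divergence free (this is where the anisotropic Codazzi equation \eqref{eq:Codazzi} enters, exactly as in the isotropic theory; cf.\ \cite{HL1,HL2}), and the term $\sum_i\langle P_r\nabla_{e_i}Z,e_i\rangle$. In the latter the contribution of $R_i$ is $\langle SP_r(\varphi\circ\nu)^{\top},x^{\top}\rangle-\langle SP_rx^{\top},(\varphi\circ\nu)^{\top}\rangle$, which vanishes once one knows that $SP_r$ is symmetric; and $SP_r$ is symmetric because, writing $S_F=A_FS$, it is a linear combination of the products $S(A_FS)^k=SA_FSA_F\cdots A_FS$ $(0\le k\le r)$, each a palindrome in the symmetric operators $S$ and $A_F$. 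Together with the purely algebraic trace identities $\mathrm{tr}(P_r)=(n-r)\sigma_r$ and $\mathrm{tr}(S_FP_r)=(r+1)\sigma_{r+1}$ — which hold for any operator with real eigenvalues, symmetric or not — this yields
\begin{equation*}
\mathrm{div}_\Sigma(P_rZ)=(F\circ\nu)\,\mathrm{tr}(P_r)+\langle x,\nu\rangle\,\mathrm{tr}(S_FP_r)=(n-r)\sigma_r\,(F\circ\nu)+(r+1)\sigma_{r+1}\,\langle x,\nu\rangle.
\end{equation*}
Since $(n-r)C_n^r=(r+1)C_n^{r+1}$ and $\sigma_k=C_n^kH_k$, the right-hand side equals $(n-r)C_n^r\big(H_r(F\circ\nu)+H_{r+1}\langle x,\nu\rangle\big)$; integrating over the closed hypersurface $\Sigma$ gives \eqref{eq:Minkowski_r}, and $r=0$ recovers the first formula.

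The main obstacle is the non-symmetry of $S_F$, which forbids quoting the classical Newton-transformation calculus verbatim. It is overcome by isolating where symmetry is actually used: the trace identities are algebraic and insensitive to it; the operator $SP_r$ that governs the cancellation of the remainder $R_i$ is, despite $P_r$ being non-symmetric in general, symmetric by the palindrome argument; and the divergence-freeness of $P_r$ is controlled by the anisotropic Codazzi equation \eqref{eq:Codazzi}, which is already at hand. The one genuinely computational point — the displayed formula for $\nabla_{e_i}Z$ — is routine with the structure equations, and I would settle it first.
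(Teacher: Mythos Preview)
The paper does not give its own proof of this proposition; it simply attributes the result to \cite{HL1,HL2}. So there is nothing in the paper to compare your argument against directly. Your treatment of the case $r=0$ is correct and clean: the vector field $Z=(F\circ\nu)\,x-\langle x,\nu\rangle(\varphi\circ\nu)$ is tangent, the formula $\nabla_{e_i}Z=(F\circ\nu)e_i+\langle x,\nu\rangle S_Fe_i+R_i$ checks out from \eqref{eq:x_i x_ij}--\eqref{eq:Fnu_ij}, and the trace of $R_i$ vanishes by the symmetry of $S$.

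For $r\ge 1$, however, there is a genuine gap. In the product rule
\[
\mathrm{div}_\Sigma(P_rZ)=\sum_{i,j}(P_r)^i_{\ j;i}\,Z^j+\sum_i\langle P_r\nabla_{e_i}Z,e_i\rangle,
\]
the first sum vanishes only if $\sum_i(P_r)^i_{\ j;i}=0$ for each $j$. What the anisotropic Codazzi equation \eqref{eq:Codazzi} actually yields (and what is proved in \cite{HL2}) is the \emph{other} contraction $\sum_i(\nabla_{e_i}P_r)e_i=0$, i.e.\ $\sum_i(P_r)^k_{\ i;i}=0$ for each $k$. These coincide only when $P_r$ is symmetric, which it is not here. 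Concretely, for $P_1=\sigma_1\mathrm{Id}-S_F$ one has $(P_1)^i_{\ j}=\sigma_1\delta_{ij}-s_{ji}$, so $\sum_i(P_1)^i_{\ j;i}=(\sigma_1)_{,j}-\sum_i s_{jii}=\sum_k s_{kjk}-\sum_k s_{jkk}$; Codazzi lets you swap the last two indices of $s$ but not the first two, so this does not vanish in general. Switching to $P_r^{T}Z$ fixes the divergence term but then spoils the trace identity: one gets $\sum_i\langle S_Fe_i,P_re_i\rangle=\mathrm{tr}(S_F^{T}P_r)$, which differs from $\mathrm{tr}(S_FP_r)=(r+1)\sigma_{r+1}$ (take $A_F=\mathrm{diag}(1,2)$, $S=\begin{smallmatrix}0&1\\1&0\end{smallmatrix}$ to see they disagree). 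So the non-symmetry of $S_F$ is not merely a matter of bookkeeping; it blocks the isotropic argument at exactly this point. The proofs in \cite{HL1,HL2} handle the higher-order formulas by a different route, and your palindrome observation about $SP_r$, while correct, does not by itself close this gap.
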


%%%%%%%%%%%%%%%%%%%%
\section{Heintze-Karcher type inequality}
\label{Sec:HK}
%%%%%%%%%%%%%%%%%%%%

Consider a closed orientable hypersurface $\Sigma$ embedded in $\mathbb{R}^{n+1}$.
Denote by $\nu$ the inner unit normal vector field to $\Sigma$.
Assume that the anisotropic mean curvature $H_F$ with respect to the inner normal $\nu$ is everywhere positive
on $\Sigma$.
Suppose that there exists a domain $\Omega\subset \mathbb{R}^{n+1}$ such that $\partial\Omega=\Sigma$.

Given a smooth positive $F$ with convexity condition,
we can associate the dual norm $F^*: \mathbb{R}^{n+1}\rightarrow \mathbb{R}$ defined by (\cite{Morgan}, \cite{HLMG})
$$F^*(x)=\mathrm{sup}\{\frac{\langle x,z\rangle}{F(z)}|z\in\mathbb{S}^n\}.$$
Then we can define the $F$-distance function $d_F:\mathbb{R}^{n+1}\times \mathbb{R}^{n+1}\rightarrow \mathbb{R}$
to be $d_F(x,y)=F^*(y-x)$. Note that in general $d_F(x,y)\neq d_F(y,x)$ and
when $F\equiv 1$, $d_F$ is the Euclidean distance function $d$.

For $P\in \mathbb{R}^{n+1}$, let $\rho_F(P)=d_{F}(P,\Sigma)$.
Then we can foliate $\Omega$ by a smooth family of hypersurfaces
$$\Sigma_t:=\{\Phi(x,t)=x+t\nu_F(x)|x\in \Sigma \textrm{ and } t<c(x)\},$$
where $c:\Sigma\rightarrow \mathbb{R}\cup \{+\infty\}$ is the $F$-cut function of $\Sigma$ (See \cite{HLMG}).
The point $\Phi(x,t)$ on $\Sigma_t$ satisfies
\begin{equation}\label{eq:flow}
\frac{\partial}{\partial t}\Phi(x,t)=\nu_F=:f\nu+\xi,
\end{equation}
and $\Sigma_t$ will disappear at the time $T=max_{\Sigma}c(x)$,
where $f$ is a smooth function defined on $\Sigma\times I\subset \Sigma \times \mathbb{R}$
and $\xi$ is tangent to $\Sigma_t$.
Remark that the anisotropic normal $\nu_F=\phi\circ\nu=F(\nu)\nu+DF|_\nu$.
So $f=F(\nu)$ and $\xi=DF|_{\nu}$ in \eqref{eq:flow}.

\begin{pro}\label{pro:flow}
Under the flow \eqref{eq:flow}, we have the following evolution equations:
\begin{eqnarray}
&&\frac{\partial }{\partial t}dA_t=(\mathrm{div}\xi-nHf)dA_t,\\
&&\frac{\partial \nu}{\partial t}=-\nabla f+d\nu(\xi),\\
%&&\frac{\partial H}{\partial t}=\Delta_{\Sigma_t} f+n\langle \nabla H,\xi\rangle +\mathrm{tr}(S^2)f,\\
&&\frac{\partial}{\partial t}F\circ \nu_t=\langle D F(\nu_t), -\nabla f+ d\nu_t(\xi)\rangle,\\
&&\frac{\partial}{\partial t}H_F=\Delta_F f+\mathrm{tr}(A_F S^2)f+\langle \nabla H_F, \xi \rangle.
\end{eqnarray}
\end{pro}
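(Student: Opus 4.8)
The plan is to regard the flow \eqref{eq:flow} as the purely normal flow $\partial_t\Phi=f\nu$ corrected by the tangential velocity $\xi$, and to extract each of the four identities from the structure equations of Section \ref{Sec:Pre}. Since a tangential velocity field merely reparametrises the hypersurfaces $\Sigma_t$, the $t$-derivative of any geometric quantity is the sum of its derivative under the normal flow $\partial_t\Phi=f\nu$ and its Lie derivative along $\xi$; for a scalar $u$ on $\Sigma$ the latter is $\langle\nabla u,\xi\rangle$, for the area form it is $(\operatorname{div}\xi)\,dA$, and for $\nu$ viewed as an $\mathbb{R}^{n+1}$-valued map it is $d\nu(\xi)$. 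Thus it suffices to prove the four identities for $\partial_t\Phi=f\nu$ and then add these reparametrisation terms, which are precisely the $\xi$-contributions appearing on the right-hand sides.

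For the normal flow the first three identities are short. From $g_{ij}=\langle\partial_i\Phi,\partial_j\Phi\rangle$ and the Weingarten equation \eqref{eq:nu_i nu_ij}, i.e. $\langle\partial_i\nu,\partial_j\Phi\rangle=-h_{ij}$, one gets $\partial_t g_{ij}=-2fh_{ij}$, hence $\partial_t(dA)=\tfrac12 g^{ij}(\partial_t g_{ij})\,dA=-fg^{ij}h_{ij}\,dA=-nHf\,dA$. Differentiating $\langle\nu,\partial_i\Phi\rangle=0$ shows that the tangential part of $\partial_t\nu$ is $-\nabla f$, and differentiating $|\nu|^2=1$ shows that its normal part vanishes; thus $\partial_t\nu=-\nabla f$. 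The third identity is then the chain rule $\partial_t(F\circ\nu)=\langle DF|_\nu,\partial_t\nu\rangle$, since $DF$ is the gradient of $F$ on $\mathbb{S}^n$.

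The fourth identity is the crux. I would differentiate $H_F=\operatorname{tr}(S_F)=\sum_{i,l}(A_{il}\circ\nu)h_{li}$ in the adapted orthonormal frame $e_i=E_i\circ\nu$. Two ingredients are needed under the normal flow: the evolution of the second fundamental form, obtained from $\partial_t(\partial_i\partial_j\Phi)=\partial_i\partial_j(f\nu)$ together with the evolution of the frame and of $\nu$, which with the conventions of Section \ref{Sec:Pre} takes the form $\partial_t h_{ij}=f_{;ij}+f\sum_k h_{ik}h_{kj}$ modulo terms in the connection forms $\omega_{ij}$; and the evolution of $A_{ij}\circ\nu$, obtained by pulling the structure equation \eqref{eq:dA_ij} back along $(x,t)\mapsto\nu_t(x)$ and inserting $\partial_t\nu=-\sum_k f_k e_k$, which gives $\partial_t(A_{ij}\circ\nu)=-\sum_k(A_{ijk}\circ\nu)f_k$ modulo terms in $\omega_{ij}$. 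Evaluating at an arbitrary point at which the connection forms $\theta_{ij}$ on $\mathbb{S}^n$, hence $\omega_{ij}=\nu^*\theta_{ij}$, vanish, all frame-rotation terms drop out; combining the two ingredients with the expansion $\Delta_F f=\sum_{i,k}\big((A_{ik}\circ\nu)f_k\big)_{,i}=-\sum_{i,k,p}(A_{ikp}\circ\nu)h_{pi}f_k+\sum_{i,k}(A_{ik}\circ\nu)f_{;ki}$ (which uses \eqref{eq:A_ijk}) and with the symmetry \eqref{eq:Aijk=Aikj}, which kills the leftover third-order term $\sum_{i,l,k}\big((A_{ikl}-A_{ilk})\circ\nu\big)h_{li}f_k$, one arrives at $\partial_t H_F=\Delta_F f+\operatorname{tr}(A_F S^2)f$ for the normal flow. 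This runs parallel to the computation of \eqref{eq:Delta_F F}. Adding the reparametrisation term $\langle\nabla H_F,\xi\rangle$ completes the identity.

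The main obstacle is exactly this last computation. Because $S_F=A_F\circ d\nu$ is not symmetric and because the adapted frame $e_i=E_i\circ\nu$ twists as the Gauss map evolves, one must carry along all the connection-form contributions in both $\partial_t h_{ij}$ and $\partial_t(A_{ij}\circ\nu)$; the point that makes the identity work is that these contributions cancel inside $\operatorname{tr}(S_F)$, so that evaluating at a point where $\omega_{ij}=0$ reduces the whole statement to the algebraic relations \eqref{eq:A_ijk}, \eqref{eq:Aijk=Aikj} and the classical variation formulas. By contrast, the first three identities are routine once the flow has been split into its normal and tangential parts.
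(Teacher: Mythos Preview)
Your proposal is correct and in fact supplies what the paper omits: the paper's own proof of this proposition consists only of the remark that the first three identities are classical and that the fourth follows from Lemma~2.1 of \cite{HL1} or (4.20) in \cite{CM}. Your argument---splitting the flow into the normal speed $f\nu$ and the tangential drift $\xi$, computing each evolution under the purely normal flow, and then adding the reparametrisation terms $\operatorname{div}\xi\,dA$, $d\nu(\xi)$, $\langle\nabla H_F,\xi\rangle$---is precisely the standard route behind those citations. In particular, your derivation of $\partial_t H_F$ by differentiating $\sum_{i,l}(A_{il}\circ\nu)h_{li}$, using $\partial_t h_{ij}=f_{;ij}+f\sum_k h_{ik}h_{kj}$ and $\partial_t(A_{ij}\circ\nu)=-\sum_k(A_{ijk}\circ\nu)f_k$, and then matching against the expansion of $\Delta_F f$ via \eqref{eq:A_ijk} and the symmetry \eqref{eq:Aijk=Aikj}, is clean and faithful to the moving-frame formalism of Section~\ref{Sec:Pre}; it is essentially the same computation pattern as the paper's derivation of \eqref{eq:Delta_F F}, as you observe.
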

\begin{proof}
In fact, the first three equations are classical results and the last one follows from Lemma 2.1 of \cite{HL1}
or (4.20) in \cite{CM}.
%or other papers involving the computations of the second variations of $\mathcal{F}(x)$.
\end{proof}

Define
\begin{equation*}
Q(t):=n\int_{\Sigma_t}\frac{F\circ\nu_t}{H_F} dA_t.
\end{equation*}
From the identities in Proposition \ref{pro:flow}, we have
\begin{eqnarray*}
\frac{1}{n}Q^{\prime}(t)&=&\int_{\Sigma_t}\frac{\partial(F\circ\nu_t)}{\partial t}\frac{1}{H_F}dA_t
+(F\circ \nu_t)\frac{\partial}{\partial t}(\frac{1}{H_F})dA_t
+\frac{F\circ\nu_t}{H_F}\frac{\partial}{\partial t}dA_t\\
&=&\int_{\Sigma_t} \{f\mathrm{div}(\frac{1}{H_F}DF|_{\nu_t})
 - \mathrm{div}(\frac{f}{H_F}DF|_{\nu_t})+\mathrm{div}(\frac{F\circ\nu_t}{H_F}\xi)\\
&\quad& \quad -\frac{f}{H_F}nH(F\circ \nu_t) -\frac{F\circ \nu_t}{H_F^2}(\Delta_F f+\mathrm{tr}(A_F S^2)f)\} dA_t.
\end{eqnarray*}
Thus by the divergence theorem and the definition of $H_F$ \eqref{eq:H_F def} we get
\begin{eqnarray*}
\frac{1}{n}Q^\prime(t)&=& \int_{\Sigma_t} \{f\langle \nabla \frac{1}{H_F}, DF\circ \nu_t\rangle
+\frac{f}{H_F}[\mathrm{div}(DF\circ\nu_t)-nH F\circ\nu_t]\\
&\quad & \quad -\frac{F\circ\nu_t}{H_F^2}[\Delta_F f+\mathrm{tr}(A_F S^2)f]\}dA_t \\
&=&\int_{\Sigma_t}\{-\frac{f}{H_F^2}\langle \nabla H_F,DF\circ\nu_t \rangle-f
-\frac{F\circ \nu_t}{H_F^2}[\Delta_F f+\mathrm{tr}(A_F S^2)f]\}dA_t.
\end{eqnarray*}
Now taking account into $f=F\circ\nu_t$ and \eqref{eq:Delta_F F}, we get
\begin{eqnarray*}
\frac{1}{n}Q^\prime(t)&=&\int_{\Sigma_t}\{-\frac{F\circ \nu_t}{H_F^2}[\langle \nabla H_F,DF\circ\nu_t \rangle
+ \Delta_F (F\circ\nu_t)+\mathrm{tr}(A_F S^2)F\circ\nu_t]-F\circ \nu_t\}dA_t.\\
&=&-\int_{\Sigma_t}(\frac{\mathrm{tr}(S_F^2)}{H_F^2}+1)F\circ\nu_t dA_t\\
&\leq & -(1+\frac{1}{n})\int_{\Sigma_t}F\circ\nu_t dA_t <0,
\end{eqnarray*}
where we have used $\frac{\mathrm{tr}(S_F^2)}{H_F^2}\geq \frac{1}{n}$ and the equal sign holds if and only if
$S_F=\frac{H_F}{n}\mathrm{Id}$.
This shows that $Q(t)$ is monotone decreasing.

For $0<\tau <T$,
\begin{eqnarray*}
Q(0)-Q(\tau)&=&-\int_0^\tau Q^{\prime}(t) dt \geq (n+1)\int_0^\tau \int_{\Sigma_t} F\circ\nu_t  dAdt\\
&=&(n+1)\int_{\Omega\cap \{\rho_F\leq \tau\}} dV,
\end{eqnarray*}
where the last equality follows from the co-area formula.
Let $\tau\rightarrow T$. Then we obtain the following Heintze-Karcher type integral inequality
that one can find also in \cite{HLMG} where it was proved using the ideas of \cite{MontielRos91}.

\begin{thm}\label{Thm:HK}
 Let $x:\Sigma\rightarrow \mathbb{R}^{n+1}$ be a closed hypersurface embedded into the Euclidean space.
If the anisotropic mean curvature $H_F$ with respect to the inner normal $\nu$ is everywhere positive on $\Sigma$,
then we have
\begin{equation}\label{eq:HK}
n\int_{\Sigma}\frac{F\circ \nu}{H_F} dA \geq (n+1)V(\Omega),
\end{equation}
where $V(\Omega)$ is the volume of the compact domain $\Omega$ determined by $\Sigma$.
Moreover, the equality holds if and only if $\Sigma$ is anisotropic umbilical.
\end{thm}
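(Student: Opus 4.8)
The plan is to realize the bounded region $\Omega$ as swept out by the inward anisotropic normal flow and to exploit a strict monotonicity of the weighted area
\[
Q(t)=n\int_{\Sigma_t}\frac{F\circ\nu_t}{H_F}\,dA_t .
\]
First I would make the foliation precise. Using the $F$-distance $\rho_F(P)=d_F(P,\Sigma)$ and the associated $F$-cut function $c\colon\Sigma\to\mathbb{R}\cup\{+\infty\}$, the hypersurfaces $\Sigma_t=\{x+t\,\nu_F(x):x\in\Sigma,\ t<c(x)\}$ are smooth and embedded, they exhaust $\Omega$ up to the $F$-cut locus (a closed set of $(n+1)$-dimensional measure zero), and the sweep-out satisfies $\partial_t\Phi=\nu_F=f\nu+\xi$ with $f=F\circ\nu_t$ and $\xi=DF|_{\nu_t}$, so that Proposition~\ref{pro:flow} applies along the leaves. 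One also has to check that $H_F$ stays positive on $\Sigma_t$ for $t<c(x)$: it is positive on $\Sigma_0=\Sigma$ by hypothesis, and along this flow the anisotropic shape operator obeys a Riccati-type evolution, so $H_F$ can only increase up to the focal/cut time. Hence $Q$ is well defined and $C^1$ on $[0,T)$ with $T=\max_\Sigma c$.

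Next I would carry out the monotonicity computation. Differentiating $Q$ and substituting the evolution equations of Proposition~\ref{pro:flow} for $\partial_t(F\circ\nu_t)$, $\partial_t(1/H_F)$ and $\partial_t\,dA_t$, the terms in divergence form integrate to zero over the closed hypersurface $\Sigma_t$; applying the definition~\eqref{eq:H_F def} of $H_F$, then setting $f=F\circ\nu_t$ and invoking the fundamental identity~\eqref{eq:Delta_F F}, the integrand collapses to
\[
\frac{1}{n}Q'(t)=-\int_{\Sigma_t}\Bigl(\frac{\mathrm{tr}(S_F^2)}{H_F^2}+1\Bigr)(F\circ\nu_t)\,dA_t .
\]
Since $S_F$ is diagonalizable over $\mathbb{R}$ with $H_F=\mathrm{tr}(S_F)=\sum_i\lambda_i$ and $\mathrm{tr}(S_F^2)=\sum_i\lambda_i^2$, the Cauchy--Schwarz inequality gives $\mathrm{tr}(S_F^2)\ge\frac{1}{n}H_F^2$, with equality at a point exactly when $\lambda_1=\cdots=\lambda_n$ there, i.e.\ when $S_F=\frac{H_F}{n}\mathrm{Id}$. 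Therefore $Q'(t)\le-(n+1)\int_{\Sigma_t}(F\circ\nu_t)\,dA_t<0$, so $Q$ is strictly decreasing.

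Then I would integrate this over $[0,\tau]$ with $\tau<T$ and use the co-area formula for $\rho_F$ to obtain
\[
Q(0)-Q(\tau)\ \ge\ (n+1)\int_0^\tau\!\!\int_{\Sigma_t}(F\circ\nu_t)\,dA\,dt\ =\ (n+1)\,V\bigl(\Omega\cap\{\rho_F\le\tau\}\bigr).
\]
Since $Q(\tau)\ge0$ and the regions $\Omega\cap\{\rho_F\le\tau\}$ increase to $\Omega$ up to a null set as $\tau\uparrow T$, letting $\tau\to T$ yields $Q(0)\ge(n+1)V(\Omega)$, which is exactly~\eqref{eq:HK}. For the equality statement, if \eqref{eq:HK} holds with equality then all of the inequalities above become equalities in the limit $\tau\to T$; using that $Q$ is $C^1$ at $t=0$ this forces $\mathrm{tr}(S_F^2)=\frac{1}{n}H_F^2$ on $\Sigma=\Sigma_0$, hence $\lambda_1=\cdots=\lambda_n$ everywhere, with common value nonzero because $H_F>0$. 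Lemma~\ref{lem:umbilical} then shows $\Sigma$ is the Wulff shape up to translations and homotheties, i.e.\ $\Sigma$ is anisotropic umbilical. Conversely, if $\Sigma$ is anisotropic umbilical, then by Lemma~\ref{lem:umbilical} the domain $\Omega$ is a translate and rescaling of the region bounded by the Wulff shape, whose foliation $\{\Sigma_t\}$ consists of rescaled Wulff shapes collapsing to a point, so every step above is an equality and \eqref{eq:HK} is an equality.

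I expect the analytic setup to be the main obstacle rather than the computation: one must justify that the anisotropic normal flow genuinely foliates $\Omega$, that the $F$-cut locus is negligible so that the co-area formula applies verbatim, and that $H_F$ does not vanish along $\Sigma_t$ before the cut time. Once the identities of Section~\ref{Sec:Pre} and the evolution equations of Proposition~\ref{pro:flow} are available, the monotonicity step is routine.
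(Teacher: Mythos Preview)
Your proposal is correct and follows essentially the same route as the paper: the same foliation by the inward anisotropic normal flow, the same quantity $Q(t)$, the same reduction of $Q'(t)$ via Proposition~\ref{pro:flow} and identity~\eqref{eq:Delta_F F} to $-\int_{\Sigma_t}(\mathrm{tr}(S_F^2)/H_F^2+1)(F\circ\nu_t)\,dA_t$, and the same co-area/limit argument. Your write-up is in places more explicit than the paper's (you spell out $Q(\tau)\ge 0$, the Riccati-type persistence of $H_F>0$, and both directions of the equality case), but there is no genuine methodological difference.
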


%%%%%%%%%%%%%%%%%%%
\section{Proof of the main theorem}
\label{Sec:Proof}
%%%%%%%%%%%%%%%%%%%

Once we have Minkowski formula \eqref{eq:Minkowski_r} and Heintze-Karcher type inequality \eqref{eq:HK},
it is straightforward to prove the Alexandrov type theorem by the standard argument (\cite{MontielRos91}, \cite{HLMG}):

Since the hypersurface $\Sigma$ is compact and oriented in $\mathbb{R}^{n+1}$,
there is a point on $\Sigma$ where all the principal curvatures are positive with respect to the unit inner normal $\nu$.
It follows from $A_F$ is positive definite that all the anisotropic principal curvatures at this point are positive.
So the $r$-th anisotropic mean curvature is a positive constant.
It follows from the G{\aa}rding inequality (c.f. Lemma 1 in \cite{MontielRos91}) that
$H_{r-1}\geq H_r^{\frac{r-1}{r}}$ and
$H_r^{1/r}\leq H_1=H_F/n$ for $r=1, \cdots, n-1$ and the equality happens only at anisotropic points if $r\geq 2$.
Thus the Heintze-Karcher type inequality implies
\begin{equation}\label{eq:HK_r}
(n+1) H_r^{1/r} {\rm V}(\Omega)\leq \int_{\Sigma} F\circ\nu dA
\end{equation}
and the equality holds if and only if $\Sigma$ is anisotropic umbilical.
Combined with the Minkowski formula \eqref{eq:Minkowski_r},
\begin{equation*}
\begin{split}
0&=\int_{\Sigma}(H_{r-1}F\circ\nu+H_r\langle x,\nu\rangle)dA
\geq \int_{\Sigma}(H_r^{\frac{r-1}{r}}F\circ\nu +H_r\langle x,\nu\rangle)dA\\
&=H_r^{\frac{r-1}{r}}\int_{\Sigma}(F\circ\nu +H_r^{1/r}\langle x,\nu\rangle)dA.
\end{split}
\end{equation*}
Since $H_r$ is a positive constant, we have
$$(n+1) H_r^{1/r} {\rm V}(\Omega)=-H_r^{1/r}\int_{\Sigma}\langle x,\nu\rangle dA \geq \int_{\Sigma} F\circ\nu dA.$$
Hence the equal sign in \eqref{eq:HK_r} is attached. Together with Lemma \ref{lem:umbilical},
the proof is complete.

%%%%%%%%%%%%%%%%%%%%%%%%%%%%%%%%%%%%%%%%%%%%%%%%%%%%%%%%%%%%%%%%%%%%%%

\end{document}